\def\today{\ifcase \month \or
   January \or February \or March \or April \or
   May \or June \or July \or August \or
   September \or October \or November \or December \fi
   \space\number\day , \number\year}
  \newcommand\@dotsep{4.5}
  \def\@tocline#1#2#3#4#5#6#7{\relax
     \ifnum #1>\c@tocdepth 
     \else
     \par \addpenalty\@secpenalty\addvspace{#2}%
     \begingroup \hyphenpenalty\@M
     \@ifempty{#4}{%
     \@tempdima\csname r@tocindent\number#1\endcsname\relax
        }{%
         \@tempdima#4\relax
           }%
      \parindent\z@ \leftskip#3\relax \advance\leftskip\@tempdima\relax
      \rightskip\@pnumwidth plus1em \parfillskip-\@pnumwidth
       #5\leavevmode\hskip-\@tempdima #6\relax
       \leaders\hbox{$\m@th
       \mkern \@dotsep mu\hbox{.}\mkern \@dotsep mu$}\hfill
       \hbox to\@pnumwidth{\@tocpagenum{#7}}\par
       \nobreak
        \endgroup
         \fi}
\begin{document}


\makeatletter
\@addtoreset{figure}{section}
\def\thefigure{\thesection.\@arabic\c@figure}
\def\fps@figure{h,t}
\@addtoreset{table}{bsection}

\def\thetable{\thesection.\@arabic\c@table}
\def\fps@table{h, t}
\@addtoreset{equation}{section}
\def\theequation{
\arabic{equation}}
\makeatother

\newcommand{\bfi}{\bfseries\itshape}

\newtheorem{theorem}{Theorem}
\newtheorem{acknowledgment}[theorem]{Acknowledgment}
\newtheorem{corollary}[theorem]{Corollary}
\newtheorem{definition}[theorem]{Definition}
\newtheorem{example}[theorem]{Example}
\newtheorem{lemma}[theorem]{Lemma}
\newtheorem{notation}[theorem]{Notation}
\newtheorem{problem}[theorem]{Problem}
\newtheorem{proposition}[theorem]{Proposition}
\newtheorem{remark}[theorem]{Remark}
\newtheorem{setting}[theorem]{Setting}

\newtheorem{C}[theorem]{Comment}

\numberwithin{theorem}{section}
\numberwithin{equation}{section}

\renewcommand{\1}{{\bf 1}}
\newcommand{\Ad}{{\rm Ad}}
\newcommand{\Alg}{{\rm Alg}\,}
\newcommand{\Aut}{{\rm Aut}\,}
\newcommand{\ad}{{\rm ad}}
\newcommand{\Borel}{{\rm Borel}}
\newcommand{\Ci}{{\mathcal C}^\infty}
\newcommand{\Cpol}{{\mathcal C}^\infty_{\rm pol}}
\newcommand{\Der}{{\rm Der}\,}
\newcommand{\Diff}{{\rm Diff}\,}
\newcommand{\de}{{\rm d}}
\newcommand{\ee}{{\rm e}}
\newcommand{\End}{{\rm End}\,}
\newcommand{\ev}{{\rm ev}}
\newcommand{\id}{{\rm id}}
\newcommand{\ie}{{\rm i}}
\newcommand{\GL}{{\rm GL}}
\newcommand{\gl}{{{\mathfrak g}{\mathfrak l}}}
\newcommand{\Hom}{{\rm Hom}\,}
\newcommand{\Img}{{\rm Im}\,}
\newcommand{\Ind}{{\rm Ind}}
\newcommand{\Ker}{{\rm Ker}\,}
\newcommand{\Lie}{\text{\bf L}}
\newcommand{\m}{\text{\bf m}}
\newcommand{\pr}{{\rm pr}}
\newcommand{\Ran}{{\rm Ran}\,}
\renewcommand{\Re}{{\rm Re}\,}
\newcommand{\redrtimes}{\,\widetilde{\rtimes}\,}
\newcommand{\redtimes}{\,\widetilde{\times}\,}
\newcommand{\Si}{{\mathcal S}^\infty}
\newcommand{\Sym}{{\rm Sym}\,}
\newcommand{\shg}{{{\mathfrak s}{\mathfrak h}}}
\newcommand{\so}{\text{so}}
\newcommand{\spa}{{\rm span}\,}
\newcommand{\Tr}{{\rm Tr}\,}
\newcommand{\Op}{{\rm Op}}
\newcommand{\U}{{\rm U}}
\newcommand{\Wig}{{\mathcal W}}

\newcommand{\CC}{{\mathbb C}}
\newcommand{\HH}{{\mathbb H}}
\newcommand{\RR}{{\mathbb R}}
\newcommand{\TT}{{\mathbb T}}

\newcommand{\Ac}{{\mathcal A}}
\newcommand{\Bc}{{\mathcal B}}
\newcommand{\Cc}{{\mathcal C}}
\newcommand{\Dc}{{\mathcal D}}
\newcommand{\Ec}{{\mathcal E}}
\newcommand{\Fc}{{\mathcal F}}
\newcommand{\Hc}{{\mathcal H}}
\newcommand{\Jc}{{\mathcal J}}
\newcommand{\Lc}{{\mathcal L}}
\renewcommand{\Mc}{{\mathcal M}}
\newcommand{\Nc}{{\mathcal N}}
\newcommand{\Oc}{{\mathcal O}}
\newcommand{\Pc}{{\mathcal P}}
\newcommand{\Sc}{{\mathcal S}}
\newcommand{\Tc}{{\mathcal T}}
\newcommand{\Vc}{{\mathcal V}}
\newcommand{\Uc}{{\mathcal U}}
\newcommand{\Yc}{{\mathcal Y}}
\newcommand{\Zc}{{\mathcal Z}}

\newcommand{\Bg}{{\mathfrak B}}
\newcommand{\Fg}{{\mathfrak F}}
\newcommand{\Gg}{{\mathfrak G}}
\newcommand{\Ig}{{\mathfrak I}}
\newcommand{\Jg}{{\mathfrak J}}
\newcommand{\Lg}{{\mathfrak L}}
\newcommand{\Pg}{{\mathfrak P}}
\newcommand{\Sg}{{\mathfrak S}}
\newcommand{\Xg}{{\mathfrak X}}
\newcommand{\Yg}{{\mathfrak Y}}
\newcommand{\Zg}{{\mathfrak Z}}

\newcommand{\ag}{{\mathfrak a}}
\newcommand{\bg}{{\mathfrak b}}
\newcommand{\cg}{{\mathfrak c}}
\newcommand{\dg}{{\mathfrak d}}
\renewcommand{\gg}{{\mathfrak g}}
\newcommand{\hg}{{\mathfrak h}}
\newcommand{\kg}{{\mathfrak k}}
\newcommand{\mg}{{\mathfrak m}}
\newcommand{\n}{{\mathfrak n}}
\newcommand{\og}{{\mathfrak o}}
\newcommand{\pg}{{\mathfrak p}}
\newcommand{\sg}{{\mathfrak s}}
\newcommand{\tg}{{\mathfrak t}}
\newcommand{\ug}{{\mathfrak u}}
\newcommand{\zg}{{\mathfrak z}}

\newcommand{\ZZ}{\mathbb Z}
\newcommand{\NN}{\mathbb N}
\newcommand{\BB}{\mathbb B}

\newcommand{\ep}{\varepsilon}

\newcommand{\hake}[1]{\langle #1 \rangle }

\newcommand{\scalar}[2]{\langle #1 ,#2 \rangle }
\newcommand{\vect}[2]{(#1_1 ,\ldots ,#1_{#2})}
\newcommand{\norm}[1]{\Vert #1 \Vert }
\newcommand{\normrum}[2]{{\norm {#1}}_{#2}}

\newcommand{\upp}[1]{^{(#1)}}
\newcommand{\p}{\partial}

\newcommand{\opn}{\operatorname}
\newcommand{\slim}{\operatornamewithlimits{s-lim\,}}
\newcommand{\sgn}{\operatorname{sgn}}

\newcommand{\seq}[2]{#1_1 ,\dots ,#1_{#2} }
\newcommand{\loc}{_{\opn{loc}}}

\makeatletter
\title[Weyl-Pedersen calculus for some semidirect products]{Weyl-Pedersen calculus for some semidirect products of nilpotent Lie groups}
\author{Ingrid Belti\c t\u a, Daniel Belti\c t\u a, and Mihai Pascu}
\address{Institute of Mathematics ``Simion Stoilow'' 
of the Romanian Academy, 
P.O. Box 1-764, Bucharest, Romania}
\email{ingrid.beltita@gmail.com, Ingrid.Beltita@imar.ro}
\address{Institute of Mathematics ``Simion Stoilow'' 
of the Romanian Academy, 
P.O. Box 1-764, Bucharest, Romania}
\email{beltita@gmail.com,  Daniel.Beltita@imar.ro}
\address{%
University ``Petrol-Gaze'' of Ploie\c sti 
and
Institute of Mathematics
``Simion Stoilow''
of the Romanian Academy, 
P.O. Box 1-764, 
Bucharest
Romania}
\email{Mihai.Pascu@imar.ro}
\thanks{This research has been partially supported by the Grant
of the Romanian National Authority for Scientific Research, CNCS-UEFISCDI,
project number PN-II-ID-PCE-2011-3-0131.}
\keywords{nilpotent Lie group; coadjoint orbit}
\subjclass[2000]{Primary 17B30; Secondary 22E25, 22E27, 35S05}
\date{\today}
\makeatother

\begin{abstract} 
For certain nilpotent real Lie groups constructed as semidirect products, 
algebras of invariant differential operators on some coadjoint orbits  
are used in the study of boundedness properties of 
the Weyl-Pedersen calculus of their corresponding unitary irreducible representations. 
Our main result is applicable to all unitary irreducible representations of arbitrary 3-step nilpotent Lie groups.  
\end{abstract}

\maketitle


\section{Introduction}  

Weyl quantization for representations of Lie groups of various types has been an area of quite active research 
(see for instance \cite{Ca07}, \cite{Ca13} and the references therein). 
In this connection, the aim of the present paper is to study $L^2$-boundedness properties of 
a certain operator calculus for unitary representations of nilpotent Lie groups 
of lower nilpotence step, including all 3-step nilpotent Lie groups.  
More specifically, the deep work of 
N.V.~Pedersen (\cite{Pe94}) shows that for each unitary irreducible representation 
of a nilpotent Lie group one can set up a Weyl correspondence between tempered distributions 
on any coadjoint orbit and unbounded linear operators in the representation space 
of any unitary irreducible representation associated with that coadjoint orbit. 
This correspondence has remarkable properties and in the special case of the Heisenberg group 
it recovers the pseudo-differential Weyl calculus which has been widely used in the theory of PDEs. 

The construction of the Weyl-Pedersen calculus is to some extent noncanonical, 
in that it depends on the choice of a Jordan-H\"older basis 
in the nilpotent Lie algebra under consideration. 
Nevertheless, the construction is fully canonical 
in the case of the irreducible representations that are square integrable modulo the center, 
that is, for the representations associated to the flat coadjoint orbits, 
and one can then even prove that it is covariant with respect to the coadjoint action 
(see \cite{BB11}). 
For the representations of this type, we also recently  
characterized the maximal space of smooth functions on the coadjoint orbit 
that is invariant under the coadjoint action and gives rise to bounded linear operators via the Weyl-Pedersen calculus 
(see \cite{BB12}). 
The characterization involves growth properties of the functions 
expressed in terms of the differential operators on the coadjoint orbit 
which are invariant under the coadjoint action. 
It  is then natural to ask about the coadjoint orbits which are not necessarily flat. 
In the present paper we will take a first step in that direction, as we will now explain. 

For the purposes of the irreducible representation theory, one may restrict the attention to the 
groups with 1-dimensional center   
and moreover it is reasonable to begin by considering the groups of a lower nilpotence step. 
Note that the 2-step nilpotent Lie algebras having 1-dimensional center are precisely the Heisenberg algebras, 
and the coadjoint orbits of the Heisenberg groups are always flat. 
So the first nontrivial case to consider is the one mentioned in the following problem: 

\begin{problem}\label{probl2}
Investigate the $L^2$-boundedness properties of the Weyl-Pedersen calculus on coadjoint orbits of 
3-step nilpotent Lie groups. 
\end{problem}

We will address that problem in Corollary~\ref{3step} below, by relying on some results announced in \cite{BBP13}.  
Specifically, we will prove that the aforementioned result from \cite{BB12} carries over 
to 
coadjoint orbits of any 3-step nilpotent Lie group, 
thereby establishing a version of the Calder\'on-Vaillancourt theorem for the Weyl-Pedersen calculus, 
in which the role of the partial derivatives is played by invariant differential operators on the coadjoint 
orbits of 3-step nilpotent Lie groups. 

\subsection*{General notation}
Throughout this paper, Lie groups are finite-dimensional and are denoted by upper case Roman letters,  
and their corresponding Lie algebras are denoted by the corresponding lower case Gothic letters. 
For any real Lie algebra $\gg$ we denote by $U(\gg)$ 
the complexification of the universal associative enveloping algebra of~$\gg$ 
(see for instance \cite{CG90}), 
by $\Aut\gg$ the automorphism group of $\gg$, and by $\Der\gg$ the Lie algebra of all derivations of~$\gg$. 

For any set $A$, the notation $B\subset A$ means that $B$ is a subset of $A$ and $B\ne A$.

\section{Weyl-Pedersen calculus}

In this preliminary section we recall the Weyl-Pedersen calculus set forth in \cite{Pe94}.  
More details on this calculus, additional references, and connections with other operator calculi 
can be found for instance in \cite{BBP13}. 

Let $\gg$ be any nilpotent Lie algebra of dimension $m\ge 1$ 
with its corresponding nilpotent Lie group $G=(\gg,\cdot)$ whose multiplication 
is defined by the Baker-Campbell-Hausdorff fomula. 
Select any Jordan-H\"older basis 
$\{X_1,\dots,X_m\}$ in $\gg$.   
So for $j=1,\dots,m$ if we define $\gg_j:=\spa\{X_1,\dots,X_j\}$ then $[\gg,\gg_j]\subseteq\gg_{j-1}$, 
where $\gg_0:=\{0\}$. 
Let  $\pi\colon G\to\Bc(\Hc)$ be a unitary representation 
associated with a coadjoint orbit $\Oc\subseteq\gg^*$ via Kirillov's correspondence. 
Pick $\xi_0\in\Oc$ with its corresponding coadjoint isotropy algebra $\gg_{\xi_0}:=\{X\in\gg\mid[X,\gg]\subseteq\Ker\xi_0\}$, 
and define the set of jump indices 
$$e:=\{j\mid X_j\not\in\gg_{j-1}+\gg_{\xi_0}\}.$$ 
Denoting $\gg_e:=\spa\{X_j\mid j\in e\}$,  
one has $\gg=\gg_{\xi_0}\dotplus\gg_e$, and the mapping $\Oc\to\gg_e^*$, $\xi\mapsto\xi\vert_{\gg_e}$, 
is a diffeomorphism. 
Hence one can define an orbital Fourier transform $\Sc'(\Oc)\to\Sc'(\gg_e)$,   $a\mapsto\widehat{a}$ 
which is a linear topological isomorphism  and for every $a\in\Sc(\Oc)$ one has  
$$(\forall X\in\gg_e)\quad \widehat{a}(X)=\int\limits_{\Oc}\ee^{-\ie\langle\xi,X\rangle}a(\xi)\de\xi.$$
Here we have the Lebesgue measure $\de x$ on $\gg_e$ corresponding to the basis $\{X_j\mid j\in e\}$ 
and $\de\xi$ is the Borel measure on $\Oc$ for which 
the above diffeomorphism $\Oc\to\gg_e^*$ is a measure preserving mapping 
and the Fourier transform $L^2(\Oc)\to L^2(\gg_e)$ 
is unitary. 
The inverse of this orbital Fourier transform is denoted by $a\mapsto\check{a}$. 
 
With the notation above, 
the \emph{Weyl-Pedersen calculus}  associated to the unitary irreducible representation $\pi$ is 
$$\Op_{\pi}\colon\Sc(\Oc)\to\Bc(\Hc),\quad 
\Op_{\pi}(a)=\int\limits_{\gg_e}\widehat{a}(x)\pi(x)\de x. $$
The space of smooth vectors $\Hc_\infty:=\{v\in\Hc\mid\pi(\cdot)v\in\Ci(\gg,\Hc)\}$ 
is dense in $\Hc$ and has the natural topology of a nuclear Fr\'echet space 
with the space of the antilinear functionals denoted by $\Hc_{-\infty}:={\Hc}_\infty^*$ 
(with the strong dual topology). 
One can show that the Weyl-Pedersen calculus extends to a linear bijective mapping 
$$\Op_{\pi} \colon\Sc'(\Oc)\to\Lc(\Hc_\infty,\Hc_{-\infty}),\quad 
(\Op_{\pi} (a) v\mid w)=\langle \widehat{a},(\pi(\cdot)v\mid w)\rangle$$
for $a\in\Sc'(\Oc)$, $v,w\in\Hc_\infty$, where in the left-hand side  $(\cdot\mid\cdot)$ denotes 
the extension of the scalar product of $\Hc$ to the sesquilinear duality pairing 
between $\Hc_\infty$ and $\Hc_{-\infty}$. 

It turned out in \cite{BB12} and \cite{BBP13} that 
the invariant differential operators on coadjoint orbits are 
a quite efective tool in the study of boundedness properties of the Weyl-Pedersen calculus. 
Specifically, one defines $\Diff(\Oc)$ as the space of all linear differential operators $D$ on $\Oc$ 
that are \emph{invariant} to the coadjoint action, in the sense that   
 $$
(\forall x\in\gg)(\forall a\in C^\infty(\Oc))\quad  
D(a\circ\Ad_G^*(x)\vert_{\Oc})=(Da)\circ\Ad_G^*(x)\vert_{\Oc}.$$
By using these differential operators one can introduce the Fr\'echet space of symbols
\begin{equation*}
\Cc^\infty_b(\Oc) =\{a\in C^\infty(\Oc)\mid  Da\in L^\infty(\Oc)\;\text{for all} \; D\in\Diff(\Oc)\},
\end{equation*} 
with the topology given by the seminorms $\{a\mapsto\Vert Da\Vert_{L^\infty(\Oc)}\}_{D\in\Diff(\Oc)}$. 
This space of symbols will be needed in our main result (Theorem~\ref{main3} below).

\section{Some properties of coadjoint orbits}

In this section we study  
some simple properties of coadjoint orbits (Propositions \ref{proj}~and~\ref{new}) that 
we were not able to find in the literature in this degree of generality, 
so we include full details of their proofs. 
These facts will be needed in the proof of Theorem~\ref{main3}. 

\begin{proposition}\label{proj}
Let $G$ be a Lie group with the Lie algebra $\gg$ and assume that 
we have a closed normal subgroup $G_0\unlhd G$ whose Lie algebra is denoted by $\gg_0$. 
Pick $\xi\in\gg^*$ with its coadjoint orbit $\Oc_{\xi}^G:=\Ad_G^*(G)\xi$ 
and the $G$-coadjoint isotropy group 
$$G_\xi:=\{g\in G\mid\Ad_G^*(g)\xi=\xi\}.$$ 
Set $\xi_0:=\xi\vert_{\gg_0}\in\gg_0^*$ and consider its coadjoint orbit 
$\Oc_{\xi_0}^{G_0}:=\Ad_{G_0}(G_0)\xi_0$ and the $G_0$-coadjoint isotropy group 
$$(G_0)_{\xi_0}:=\{g\in G_0\mid\Ad_{G_0}^*(g)\xi_0=\xi_0\}.$$ 
Define $\rho\colon\gg^*\to\gg_0^*$, $\eta\mapsto\eta\vert_{\gg_0}$.  
Then the following assertions are equivalent: 
\begin{enumerate}
\item\label{proj_item1} 
The restriction mapping $\rho$ gives a well-defined bijection $\Oc_\xi^G\to\Oc_{\xi_0}^{G_0}$. 
\item\label{proj_item2} 
We have: 
\begin{enumerate}
\item\label{proj_item2a} 
$(G_0)_{\xi_0}=G_0\cap G_\xi$; 
\item\label{proj_item2b} 
$G=G_0\cdot G_\xi$. 
\end{enumerate}
\item\label{proj_item3} 
The restriction mapping $\rho$ gives a well-defined diffeomorphism $\Oc_\xi^G\to\Oc_{\xi_0}^{G_0}$. 
\end{enumerate} 
\end{proposition}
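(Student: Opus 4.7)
The plan is to analyze $\rho$ by means of an auxiliary $G$-action on $\gg_0^*$. Since $G_0\unlhd G$, the ideal $\gg_0\subseteq\gg$ is $\Ad_G(G)$-invariant, and dualizing the resulting adjoint $G$-action on $\gg_0$ produces an action of $G$ on $\gg_0^*$ extending the coadjoint action of $G_0$; write it $(g,\eta)\mapsto g\cdot\eta$ and set $H:=\{g\in G\mid g\cdot\xi_0=\xi_0\}$. A direct computation yields $\rho(\Ad_G^*(g)\xi)=g\cdot\xi_0$ for every $g\in G$, from which three elementary facts follow at once: $G_\xi\subseteq H$ and $G_0\cap H=(G_0)_{\xi_0}$; the image $\rho(\Oc_\xi^G)=G\cdot\xi_0$ is contained in $\Oc_{\xi_0}^{G_0}=G_0\cdot\xi_0$ precisely when $G=G_0 H$; and $\rho\vert_{\Oc_\xi^G}$ is injective precisely when $H=G_\xi$, since two elements $\Ad_G^*(g_1)\xi,\Ad_G^*(g_2)\xi$ share their $\rho$-images exactly when $g_2^{-1}g_1\in H$, and coincide exactly when $g_2^{-1}g_1\in G_\xi$.

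With this dictionary in place, \eqref{proj_item1}$\Leftrightarrow$\eqref{proj_item2} reduces to a brief group-theoretic manipulation. Assuming \eqref{proj_item2}: condition (b) combined with $G_\xi\subseteq H$ gives $G=G_0 G_\xi\subseteq G_0 H$, and for any $g\in H$, writing $g=g_0 h$ with $g_0\in G_0$, $h\in G_\xi$ via (b) produces $g_0=gh^{-1}\in G_0\cap H=(G_0)_{\xi_0}=G_0\cap G_\xi$ by (a), so $g\in G_\xi$ and $H=G_\xi$; thus \eqref{proj_item1} follows. Conversely, \eqref{proj_item1} gives $G=G_0 H$ and $H=G_\xi$, whence $G=G_0 G_\xi$ is (b), and $(G_0)_{\xi_0}=G_0\cap H=G_0\cap G_\xi$ is (a). Surjectivity of $\rho$ onto $\Oc_{\xi_0}^{G_0}$ is automatic, as $\rho(\Ad_G^*(g_0)\xi)=g_0\cdot\xi_0$ already covers $\Oc_{\xi_0}^{G_0}$ when $g_0$ ranges over $G_0$.

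Since \eqref{proj_item3}$\Rightarrow$\eqref{proj_item1} is immediate, the remaining and main technical step is to upgrade the bijection from \eqref{proj_item1} to a diffeomorphism. Here (b) shows the restriction of the $G$-action makes $G_0$ act transitively on $\Oc_\xi^G$, and (a) identifies the $G_0$-stabilizer of $\xi$ as $G_0\cap G_\xi=(G_0)_{\xi_0}$; the standard homogeneous-space theorem (applicable because $(G_0)_{\xi_0}$ is closed in $G_0$ and the action is smooth) then gives a $G_0$-equivariant diffeomorphism $G_0/(G_0)_{\xi_0}\xrightarrow{\sim}\Oc_\xi^G$, together with the analogous $G_0/(G_0)_{\xi_0}\xrightarrow{\sim}\Oc_{\xi_0}^{G_0}$. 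Since $\rho$ is compatible with both identifications, corresponding to the identity on $G_0/(G_0)_{\xi_0}$, it is a diffeomorphism.

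The only delicate point is invoking the homogeneous-space theorem correctly: once $G_0$-transitivity on $\Oc_\xi^G$ is established via (b), closedness of $(G_0)_{\xi_0}$ in $G_0$ and smoothness of the $G_0$-action are automatic, and the rest is formal. The whole argument is essentially a translation between three equivalent ways of saying that the $G$-orbit of $\xi$ and the $G_0$-orbit of $\xi_0$ sit over one another in a perfectly matched way via restriction.
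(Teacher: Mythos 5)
Your proof is correct and follows essentially the same route as the paper's: the equivalence of assertions (1) and (2) is the same elementary stabilizer computation (your auxiliary stabilizer $H$ of $\xi_0$ under the extended $G$-action on $\gg_0^*$ is just a cleaner way to organize it), and your final step is the paper's argument in disguise, since the homogeneous-space theorem you invoke for the transitive smooth $G_0$-action on $\Oc_\xi^G$ is proved by exactly the bijective-immersion/Sard argument that the paper writes out explicitly for its map $\beta\colon G_0/(G_0\cap G_\xi)\to (G_0G_\xi)/G_\xi$. I see no gaps.
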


\begin{proof}
It is clear that \eqref{proj_item3}$\Rightarrow$\eqref{proj_item1}, 
so it suffices to prove \eqref{proj_item1}$\Rightarrow$\eqref{proj_item2}$\Rightarrow$\eqref{proj_item3}. 

``\eqref{proj_item1}$\Rightarrow$\eqref{proj_item2}'' 
For proving \eqref{proj_item2a}, 
note that we always have $(G_0)_{\xi_0}\supseteq G_0\cap G_\xi$. 
For the converse inclusion let $g\in(G_0)_{\xi_0}$ arbitrary, 
hence $g\in G_0$ and $\xi_0\circ\Ad_{G_0}(g)=\xi_0$. 
Since $G_0\unlhd G$, it follows that $\gg_0\unlhd\gg$.  
Then we have 
$$\xi_0=\xi_0\circ\Ad_{G_0}(g)
=(\xi\vert_{\gg_0})\circ(\Ad_G(g)\vert_{\gg_0})
=(\xi\circ\Ad_G(g))\vert_{\gg_0}$$ 
hence we get 
$(\xi\circ\Ad_G(g))\vert_{\gg_0}=\xi\vert_{\gg_0}$.  
Since the restriction mapping $\rho\vert_{\Oc_\xi^G}$ is injective, 
it then follows that $\xi\circ\Ad_G(g)=\xi$, 
that is, $g\in G_\xi$. 
Therefore $g\in G_0\cap G_\xi$.  

For proving \eqref{proj_item2b}, let $g\in G$ arbitrary. 
We have $\rho(\Oc_\xi^G)=\Oc_{\xi_0}^{G_0}$, hence there exists $g_0\in G_0$ such that 
$(\xi\circ\Ad_G(g^{-1}))\vert_{\gg_0}=\xi_0\circ\Ad_{G_0}(g_0^{-1})$. 
Therefore $(\xi\circ\Ad_G(g^{-1}))\vert_{\gg_0}=(\xi\circ\Ad_G(g_0^{-1}))\vert_{\gg_0}$ 
and then by the hypothesis that $\rho\vert_{\Oc_\xi^G}$ is also injective 
we get $\xi\circ\Ad_G(g^{-1})=\xi\circ\Ad_G(g_0^{-1})$. 
This implies $\xi\circ\Ad_G(g_0^{-1}g)=\xi$, hence 
$g_0^{-1}g\in G_\xi$, that is, 
$g\in G_0G_\xi$. 
Thus $G\subseteq G_0G_\xi$, and the converse inclusion is obvious.

``\eqref{proj_item2}$\Rightarrow$\eqref{proj_item3}'' 
If both \eqref{proj_item2a} and \eqref{proj_item2b} hold true, 
then there exists the commutative diagram 
$$\xymatrix{
\Oc_\xi^G \ar[r]^{\rho\vert_{\Oc_\xi^G}} & \rho(\Oc_\xi^G) & \Oc_{\xi_0}^{G_0} \ar@{_{(}->}[l] \\
(G_0G_\xi)/G_\xi \ar[u] &  G_0/(G_0\cap G_\xi) \ar[l]_{\beta} & G_0/(G_0)_{\xi_0}  \ar[l]_{\hskip15pt \id} \ar[u]
}
$$
where the vertical arrows are the diffeomorphisms defined by the coadjoint actions of the groups 
$G$ ($=G_0G_\xi$) and $G_0$, respectively, 
and whose bottom arrow $\beta$ is given by $g(G_0\cap G_\xi)\mapsto gG_\xi$. 
It is straightforward to check that $\beta$ is a bijective immersion, 
hence either it is a diffeomorphism, or its differential fails to be surjective 
at every point. 
The latter variant is impossible because of Sard's theorem, 
hence $\beta$ is a diffeomorphism. 
(Alternatively, since $G_0\unlhd G$, it follows by the second isomorphism theorem for groups 
that we have a natural isomorphism of Lie groups  $G/G_0\simeq G_\xi/ (G_0\cap G_\xi)$, 
and this implies $\dim G-\dim G_\xi=\dim G_0-\dim(G_0\cap G_\xi)$. 
Then $\beta$ is a bijective immersion between manifolds with equal dimensions, 
hence $\beta$ is a diffeomorphism.) 

Note that the inclusion map $\rho(\Oc_\xi^G)\hookleftarrow\Oc_{\xi_0}^{G_0}$ in  the above diagram 
is actually bijective since 
for every $g\in G=G_0G_\xi$ there exists $g_0\in G_0$ such that $g\in g_0 G_\xi$,  
hence $\Ad_G^*(g)\xi=\Ad_G^*(g_0)\xi$, 
and then 
$$\rho(\Ad_G^*(g)\xi)=\rho(\Ad_G^*(g_0)\xi)=(\Ad_G^*(g_0)\xi)\vert_{\gg_0}
=\Ad_{G_0}^*(g_0)\xi_0\in\Oc_{\xi_0}^{G_0}.$$ 
We thus get the well-defined onto map $\rho\vert_{\Oc_\xi^G}\colon\Oc_\xi^G\to\Oc_{\xi_0}^{G_0}$. 
Since in the above commutative diagram the vertical arrows and the bottom ones are diffeomorphisms, 
it follows that $\rho\vert_{\Oc_\xi^G}\colon\Oc_\xi^G\to\Oc_{\xi_0}^{G_0}$ is also a diffeomorphism, 
and this concludes the proof. 
\end{proof}

Some implications from the statement of the following simple result 
can be obtained by the disintegration of restrictions of ireducible representations, 
but we give here an alternative, direct proof. 

\begin{proposition}\label{new}
Let $G=(\gg,\cdot)$ be a nilpotent Lie group with an irreducible representation $\pi\colon G\to\Bc(\Hc)$ 
associated with the coadjoint orbit $\Oc\subset\gg^*$. 
If we consider an ideal $\hg\unlhd\gg$ and the corresponding normal subgroup $H=(\hg,\cdot)$ of $G$, 
then the following assertions are equivalent: 
\begin{enumerate}
\item\label{new_item1} The restricted representation $\pi\vert_H\colon H\to\Bc(\Hc)$ is irreducible. 
\item\label{new_item2} The mapping $\gg^*\to\hg^*$, $\xi\mapsto\xi\vert_{\hg}$ 
gives a bijection of $\Oc$ onto a coadjoint orbit of $H$, which will be denoted $\Oc\vert_{\hg}$. 
\item\label{new_item3} For some/any $\xi_0\in\Oc$ we have $\gg=\gg_{\xi_0}+\hg$.
\item\label{new_item4} For some/any $\xi_0\in\Oc$ we have $\dim(\hg/(\gg_{\xi_0}\cap\hg))=\dim(\gg/\gg_{\xi_0})$. 
\item\label{new_item5} For some/any Jordan-H\"older sequence 
$\{0\}=\gg_0\subset\gg_1\subset\cdots\subset\gg_m=\gg$
with $\gg_k=\hg$ for $k=\dim\hg$, the corresponding set of jump indices of $\Oc$ is contained in $\{1,\dots,k\}$. 
\end{enumerate}
If this is the case, then the irreducible representation $\pi\vert_H$ is associated with 
the coadjoint orbit $\Oc\vert_{\hg}$ of $H$. 
\end{proposition}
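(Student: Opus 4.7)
The plan is to establish the equivalences by the scheme $(3)\Leftrightarrow(4)\Leftrightarrow(5)$ via linear algebra, $(2)\Leftrightarrow(3)$ through Proposition~\ref{proj}, and $(1)\Leftrightarrow(3)$ by running Mackey's subgroup theorem in both directions on an induced-representation model $\pi\cong\Ind_P^G\chi_\xi$. The independence of the choice of $\xi_0\in\Oc$ in conditions $(3)$--$(5)$ is immediate from the equivariance $\gg_{\Ad_G^*(g)\xi_0}=\Ad_G(g)\gg_{\xi_0}$ together with the $\Ad_G$-stability of $\hg$.

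For the elementary equivalences, $(3)\Leftrightarrow(4)$ follows from $\dim(\gg_{\xi_0}+\hg)=\dim\gg_{\xi_0}+\dim\hg-\dim(\gg_{\xi_0}\cap\hg)$, and $(3)\Leftrightarrow(5)$ from the observation that, in any Jordan--H\"older sequence refining $\hg=\gg_k$, the total number of jump indices equals $\dim(\gg/\gg_{\xi_0})$ while those in $\{1,\dots,k\}$ count $\dim(\hg/(\hg\cap\gg_{\xi_0}))$. For $(2)\Leftrightarrow(3)$ I apply Proposition~\ref{proj} with $G_0=H$: its condition (2b) translates to $\hg+\gg_\xi=\gg$ via the bijection between connected subgroups and Lie subalgebras in the simply-connected nilpotent setting, and under this hypothesis the always-valid inclusion $\hg_{\xi_0}\supseteq\hg\cap\gg_\xi$ becomes equality (for $X\in\hg_{\xi_0}$ and $Y=Y_1+Y_2\in\gg_\xi+\hg$ one has $\xi([X,Y_1])=0$ since $Y_1\in\gg_\xi$ and $\xi([X,Y_2])=\xi_0([X,Y_2])=0$ since $X\in\hg_{\xi_0}$), yielding also condition (2a).

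For $(3)\Rightarrow(1)$, fix a polarization $\pg\supseteq\gg_\xi$ of $\xi$ in $\gg$, put $P=(\pg,\cdot)$, and write $\pi\cong\Ind_P^G\chi_\xi$ with $\chi_\xi(\exp Y)=\ee^{\ie\xi(Y)}$. Since $\hg+\pg=\gg$ by (3), we have $HP=G$ and Mackey's subgroup theorem collapses to
$$\pi|_H\cong\Ind_{H\cap P}^H\chi_{\xi_0}.$$
The subspace $\hg\cap\pg$ inherits isotropy from $\pg$, and combining $\dim(\hg\cap\pg)=\dim\hg+\dim\pg-\dim\gg$, $\dim\pg=(\dim\gg+\dim\gg_\xi)/2$, and the identity $\hg_{\xi_0}=\hg\cap\gg_\xi$ obtained above yields $\dim(\hg\cap\pg)=(\dim\hg+\dim\hg_{\xi_0})/2$; hence $\hg\cap\pg$ is a polarization of $\xi_0$ in $\hg$, so $\pi|_H$ is the irreducible $H$-representation associated with the coadjoint orbit of $\xi_0$, establishing both (1) and the last sentence of the proposition.

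The main obstacle is the converse $(1)\Rightarrow(3)$, which I would attack by reversing the Mackey argument. For the same polarization $\pg$, Mackey's subgroup theorem expresses $\pi|_H$ as a direct integral of induced $H$-representations indexed by $H\backslash G/P$, so irreducibility of $\pi|_H$ forces the index set to be a single point (hence $HP=G$, equivalently $\hg+\pg=\gg$) and forces the unique summand $\Ind_{H\cap P}^H\chi_{\xi_0}$ to itself be irreducible (hence $\hg\cap\pg$ is a polarization of $\xi_0$ in $\hg$). Equating the polarization dimension $(\dim\hg+\dim\hg_{\xi_0})/2$ with the value $\dim\hg-(\dim\gg-\dim\gg_\xi)/2$ of $\dim(\hg\cap\pg)$ coming from $\hg+\pg=\gg$ and $\dim\pg=(\dim\gg+\dim\gg_\xi)/2$ produces
$$\dim(\hg/\hg_{\xi_0})=\dim(\gg/\gg_\xi),$$
which, combined with the always-valid chain $\dim(\hg/\hg_{\xi_0})\le\dim(\hg/(\hg\cap\gg_\xi))\le\dim(\gg/\gg_\xi)$, forces both inequalities to be equalities, giving simultaneously $\hg_{\xi_0}=\hg\cap\gg_\xi$ and the desired $\hg+\gg_\xi=\gg$, i.e., condition (3).
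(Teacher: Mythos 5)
Your argument is correct, but it reaches the crucial equivalence \eqref{new_item1}$\iff$\eqref{new_item3} by a genuinely different route than the paper. The paper proves \eqref{new_item1}$\Rightarrow$\eqref{new_item2}$\Rightarrow$\eqref{new_item3}$\Rightarrow$\eqref{new_item1} by induction on $\dim(\gg/\hg)$, descending through a Jordan--H\"older sequence passing through $\hg$ and invoking at each step only the codimension-one restriction dichotomy of \cite[Th.~2.5.1]{CG90}; Proposition~\ref{proj} is used there only once, to extract the dimension count \eqref{new_item2}$\Rightarrow$\eqref{new_item4}. You instead realize $\pi=\Ind_P^G\chi_\xi$ from a polarization $\pg\supseteq\gg_\xi$ and run Mackey's subgroup theorem in both directions, combined with Kirillov's criterion that inducing a character from a subordinate subalgebra yields an irreducible representation exactly when that subalgebra is a polarization. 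This is non-inductive, treats both implications symmetrically, and identifies $\pi\vert_H$ outright as $\Ind_{H\cap P}^{H}\chi_{\xi_0}$ with $\hg\cap\pg$ a polarization of $\xi_0$, so the closing sentence of the proposition comes for free; the price is heavier machinery (the measure-theoretic form of the subgroup theorem and the irreducibility criterion for induced representations) where the paper needs only the elementary codimension-one theorem. Your direct two-way use of Proposition~\ref{proj} for \eqref{new_item2}$\iff$\eqref{new_item3} is likewise a clean simplification. Two steps deserve to be made explicit rather than implicit: first, since $H$ is normal, $HP$ is a closed connected subgroup and $H\backslash G/P\cong G/HP$, so the double-coset space is regularly embedded, and if $HP\ne G$ the Mackey decomposition is a direct integral over a non-atomic base and hence reducible --- this is what legitimately forces $HP=G$ in the direction \eqref{new_item1}$\Rightarrow$\eqref{new_item3}; second, the passage between the group-level identities $G=H\cdot G_\xi$ and $(G_0)_{\xi_0}=G_0\cap G_\xi$ (with $G_0=H$) and their Lie-algebra counterparts uses the connectedness of coadjoint stabilizers and of intersections of analytic subgroups in a simply connected nilpotent Lie group.
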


\begin{proof}
First note that $\eqref{new_item3}\iff\eqref{new_item4}$ since 
there exists the canonical linear isomorphism $(\gg_{\xi_0}+\hg)/\gg_{\xi_0}\simeq\hg/(\gg_{\xi_0}\cap\hg)$, 
while $\eqref{new_item3}\iff\eqref{new_item5}$
by the definition of the jump indices. 

We will prove by induction on $\dim(\gg/\hg)$ that  $\eqref{new_item1}\iff\eqref{new_item2}\iff\eqref{new_item3}$. 
This follows at once by \cite[Th. 2.5.1(b)]{CG90} if $\dim(\gg/\hg)=1$, 
since in this case we have $\gg_{\xi_0}\not\subseteq\hg$ if and only if $\gg=\gg_{\xi_0}+\hg$. 
Now assume $\dim(\gg/\hg)\ge 2$. 
We will prove that 
$\eqref{new_item1}\Rightarrow\eqref{new_item2}\Rightarrow\eqref{new_item3}\Rightarrow\eqref{new_item1}$.
Since $\hg\unlhd\gg$, there exists a Jordan-H\"older sequence as in Assertion~\eqref{new_item5}. 
If \eqref{new_item1} holds true, then also the representation $\pi\vert_{G_{m-1}}\colon G_{m-1}\to\Bc(\Hc)$ 
is irreducible and its restriction to $H$ is $\pi\vert_H$. 
Therefore, by using \cite[Th. 2.5.1(b)]{CG90} we see that the restriction mapping $\gg^*\to\gg_{m-1}^*$ 
gives a bijection from $\Oc$ onto a coadjoint $G_{m-1}$-orbit $\Oc\vert_{\gg_{m-1}}$ 
of $G_{m-1}$. 
Then by the induction hypothesis it follows that 
the restriction mapping $\gg_{m-1}^*\to\hg^*$ gives a bijection 
from $\Oc\vert_{\gg_{m-1}}$ onto a coadjoint $H$-orbit $(\Oc\vert_{\gg_{m-1}})\vert_{\hg}$. 
Therefore Assertion~\eqref{new_item2} holds true. 
Moreover, if we know that Assertion~\eqref{new_item2} holds true, then 
Proposition~\ref{proj}\eqref{proj_item2a} shows that we have $\Oc\vert_{\hg}\simeq H/(H\cap G_{\xi_0})$, 
and then 
$$\dim(\gg/\gg_{\xi_0})=\dim\Oc=\dim(\Oc\vert_{\hg})=\dim(\hg/(\hg\cap\gg_{\xi_0}))$$
hence we get Assertion~\eqref{new_item4}, 
and we saw at the very beginning of the proof that $\eqref{new_item4}\iff\eqref{new_item3}$. 
Now assume that Assertion~\eqref{new_item3} holds true. 
Then we have 
$$\hg\subseteq\gg_{m-1}\subset\gg=\hg+\gg_{\xi_0}$$
hence $\gg_{\xi_0}\not\subseteq\gg_{m-1}$. 
Now, using \cite[Th. 2.5.1]{CG90} again 
along with the notation $G_{m-1}=(\gg_{m-1},\cdot)$,  
it follows that the representation $\pi\vert_{G_{m-1}}\colon G_{m-1}\to\Bc(\Hc)$ is irreducible. 
Furthermore, 
since $\hg\subset\gg_{m-1}\subset\hg+\gg_{\xi_0}$, it follows that 
$$\gg_{m-1}=\hg+(\gg_{m-1}\cap\gg_{\xi_0}).$$ 
If we denote $\xi_{0,m-1}:=\xi\vert_{\gg_{m-1}}$ then 
it is clear that $\gg_{m-1}\cap\gg_{\xi_0}$ is contained in the $G_{m-1}$-coadjoint isotropy algebra 
$$(\gg_{m-1})_{\xi_{0,m-1}}=\{x\in\gg_{m-1}\mid\langle\xi,[x,\gg_{m-1}]\rangle=\{0\}\}$$ 
hence by the above equality we get 
$\gg_{m-1}=\hg+(\gg_{m-1})_{\xi_{0,m-1}}$. 
Since $\dim(\gg_{m-1}/\hg)<\dim(\gg/\hg)$, 
we can apply the induction hypothesis for the irreducible representation $\pi\vert_{G_{m-1}}\colon G_{m-1}\to\Bc(\Hc)$. 
It thus follows that $(\pi\vert_{G_{m-1}})\vert_H$ is an irreducible representation. 
Since $(\pi\vert_{G_{m-1}})\vert_H=\pi\vert_H$, we see that Assertion~\eqref{new_item1} holds true, 
and this completes the induction step. 
\end{proof}

\section{Weyl-Pedersen calculus for some semidirect products}

We established in \cite{BB12} a quite natural version of the Calder\'on-Vaillancourt theorem 
for the Weyl-Pedersen calculus of the unitary irreducible representations of nilpotent Lie groups 
which are square integrable modulo the center of the group. 
In this section we use the above Proposition \ref{new} 
along with some results of \cite{BBP13} 
in order to prove that the same statement 
holds true for some unitary irreducible representations of more general nilpotent Lie groups, 
irrespective of whether these representations are square integrable or not. 

For the statement of Theorem~\ref{main3} we need the following remark. 

\begin{remark}\label{symp}
\normalfont
Let $(\gg_0,\omega)$ be any \emph{symplectic nilpotent Lie algebra}. 
This means that $\gg_0$ is a nilpotent Lie algebra and $\omega\colon\gg_0\times\gg_0\to\RR$ 
is a skew-symmetric nondegenerate bilinear functional satisfying the 2-cocycle condition
$$(\forall x,y,z\in\gg_0)\quad \omega(x,[y,z]_{\gg_0})+\omega(y,[z,x]_{\gg_0})+\omega(z,[x,y]_{\gg_0})=0. $$ 
Denote by $\gg:=\RR\dotplus_\omega\gg_0$ the corresponding 1-dimensional central extension, 
that is, $\gg=\RR\times\gg_0$ as a vector space and the Lie bracket of $\gg$ is defined by 
\begin{equation}\label{symp_eq0}
(\forall t,s\in\RR)(\forall x,y\in\gg_0)\quad [(t,x),(s,y)]_{\gg}:=(\omega(x,y),[x,y]_{\gg_0}). 
\end{equation} 
We denote by $G_0$ and $G$ the Lie groups obtained from $\gg_0$ and $\gg$ 
by using the multiplication given by the Baker-Campbell-Hausdorff formula. 
We define 
$$\Aut(\gg_0,\omega):=\{\alpha\in\Aut\gg_0\mid(\forall x,y\in\gg_0)\quad \omega(\alpha(x),\alpha(y))=\omega(x,y)\}$$
and we note that this has a natural embedding as a closed subgroup 
$$\Aut(\gg_0,\omega)\hookrightarrow\Aut\gg=\Aut G$$
since each $\alpha\in\Aut(\gg_0,\omega)$ can be extended to an automorphism $\alpha\in\Aut\gg$ with 
$\alpha(t,0)=(t,0)$ for all $t\in\RR$. 
A \emph{unipotent} automorphism group of $\gg_0$ is a closed subgroup 
$S\subseteq\Aut\gg_0$ with the property that 
for every $\alpha\in S$ there exists an integer $m\ge 1$ 
for which $(\alpha-\id_{\gg_0})^m=0$ on $\gg_0$. 

For any closed subgroup 
$S\subseteq\Aut(\gg_0,\omega)$, its Lie algebra is 
$$\sg:=\{D\in\Der(\gg_0,\omega)\mid(\forall t\in\RR)\ \exp(tD)\in S\}\hookrightarrow\Der\gg$$ 
where 
$$\Der(\gg_0,\omega):=\{D\in\Der\gg_0\mid(\forall x,y\in\gg_0)\quad \omega(Dx,y)+\omega(x,Dy)=0\}.$$
Each $D\in\Der(\gg_0,\omega)$ can be extended to a derivation $D\in\Der\gg$ 
with $D(t,0)=0$ for all $t\in\RR$. 
That is, if we denote by $\zg:=\RR\times\{0\}$ the center of $\gg$ ($=\RR\times\gg_0$), 
then we obtain a canonical isomorphism of Lie algebras 
\begin{equation}\label{symp_eq1}
\Der(\gg_0,\omega)\simeq\{D\in\Der\gg\mid \zg\subseteq\Ker D,\ \Ran D\subseteq\{0\}\times\gg_0\}. 
\end{equation} 
\end{remark}

\begin{theorem}\label{main3}
Let $(\gg_0,\omega)$ be any symplectic nilpotent Lie algebra.  
Define the nilpotent Lie algebra $\gg:=\RR\dotplus_\omega\gg_0$  
and let $G$ be its corresponding connected, simply connected nilpotent Lie group. 
Pick any connected, simply connected unipotent automorphism group $S\subseteq\Aut(\gg_0,\omega)$ 
and also define the corresponding semidirect product $\widetilde{G}:=S\ltimes G$. 

Then $\widetilde{G}$ is a connected, simply connected, nilpotent Lie group with 1-dimensional center. 
Moreover, if $\widetilde{\pi}\colon\widetilde{G}\to\Bc(\Hc)$ is any unitary irreducible representation 
associated with some coadjoint orbit $\widetilde{\Oc}$ for which 
there exists $\widetilde{\xi}\in\widetilde{\Oc}$ with 
$\widetilde{\xi}\vert_{\zg}\ne0$, $\widetilde{\xi}\vert_{\gg_0}=0$,  
and $\widetilde{\xi}\vert_{[\sg,\sg]}=0$, 
then for all $a\in C^\infty_b(\widetilde{\Oc})$ one has 
$$
(\forall D\in\Diff(\widetilde{\Oc})) \quad \Op(Da)\in\Bc(\Hc).$$ 
In addition, the Weyl-Pedersen calculus defines a continuous linear map 
$$\Op\colon \Cc^{\infty}_b(\widetilde{\Oc})\to\Bc(\Hc).$$ 
\end{theorem}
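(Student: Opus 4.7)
The plan is to use Proposition~\ref{new} to reduce the problem to one about a flat coadjoint orbit of the normal nilpotent subgroup $G\unlhd\widetilde G$, where the Calder\'on--Vaillancourt type theorem of \cite{BB12} applies, and then to invoke the results announced in \cite{BBP13} to transfer the boundedness back to $\widetilde G$.

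First I check the structural properties of $\widetilde G$. Connectedness and simple connectedness of $\widetilde G=S\ltimes G$ are inherited from $S$ and $G$; nilpotence follows from the nilpotence of $\sg$ and $\gg$ together with the fact that every $D\in\sg$ acts on $\gg$ as a nilpotent derivation (it kills $\zg$ and is nilpotent on $\gg_0$ by unipotence of $S$). For the center, writing a generic element of $\widetilde\gg$ as $(D,(t,x))\in\sg\oplus\zg\oplus\gg_0$, the bracket formula \eqref{symp_eq0} makes centrality equivalent to $x=0$ (by nondegeneracy of $\omega$) and then to $D=0$ (by faithfulness of the $\sg$-action on $\gg_0$), so the center is the one-dimensional $\zg$. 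Next, using the hypotheses $\widetilde\xi|_\zg=\lambda\ne0$, $\widetilde\xi|_{\gg_0}=0$ and $\widetilde\xi|_{[\sg,\sg]}=0$, the same bracket formula makes $\widetilde\xi([\,\cdot\,,\,\cdot\,])$ collapse to $\lambda\,\omega$ on the $\gg_0$-factors, and nondegeneracy of $\omega$ yields $\widetilde\gg_{\widetilde\xi}=\sg\oplus\zg$, hence $\widetilde\gg=\widetilde\gg_{\widetilde\xi}+\gg$. Proposition~\ref{new} then implies that $\pi:=\widetilde\pi|_G$ is irreducible and is associated with the orbit $\Oc:=\widetilde\Oc|_{\gg}$ of $G$; the same computation inside $\gg$ shows $\gg_\xi=\zg$ for $\xi:=\widetilde\xi|_\gg$, so $\Oc$ is flat and $\pi$ is square integrable modulo the center of $G$.

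Having reduced to the flat-orbit case, I match up the two Weyl--Pedersen calculi. Pick a Jordan--H\"older basis of $\widetilde\gg$ whose first $\dim\gg$ vectors form a Jordan--H\"older basis of the ideal $\gg$; by Proposition~\ref{new}\eqref{new_item5} the jump indices of $\widetilde\Oc$ all lie in $\{1,\dots,\dim\gg\}$, so the jump subspace $\widetilde\gg_e$ is contained in $\gg$ and coincides with the jump subspace for $\Oc$. Since $\widetilde\pi$ and $\pi$ agree on $\gg_e$, the restriction diffeomorphism $\rho\colon\widetilde\Oc\to\Oc$ intertwines the two orbital Fourier transforms and gives $\Op_{\widetilde\pi}(a)=\Op_\pi(a\circ\rho^{-1})$ for every admissible symbol. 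Because $\pi$ is square integrable modulo the center of $G$, the main result of \cite{BB12} supplies a continuous linear map $\Op_\pi\colon\Cc^\infty_b(\Oc)\to\Bc(\Hc)$; combining this with the above identification and the refined boundedness criterion from \cite{BBP13} yields both statements of the theorem.

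The main obstacle is the last step. Under $\rho$, the algebra $\Diff(\widetilde\Oc)$ corresponds to the subalgebra $\Diff(\Oc)^S\subseteq\Diff(\Oc)$ of $G$-invariant differential operators on $\Oc$ that are additionally invariant under the induced action of $S$; this inclusion is in general proper, so $\Cc^\infty_b(\widetilde\Oc)$ corresponds to a space of symbols on $\Oc$ that is strictly larger than $\Cc^\infty_b(\Oc)$ and carries a strictly weaker topology. A plain application of \cite{BB12} is therefore not enough, and the technical heart of the argument consists in exploiting the $S$-covariance of $\widetilde\pi$ together with the semidirect-product machinery announced in \cite{BBP13} in order to recover boundedness of $\Op_{\widetilde\pi}(a)$ under this weaker control of derivatives.
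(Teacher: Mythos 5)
Your argument tracks the paper's proof closely through all of its preparatory stages: the computation showing $\widetilde{\zg}=\zg$ is one-dimensional, the verification that $\sg\subseteq\widetilde{\gg}_{\widetilde{\xi}}$ (you in fact compute the full isotropy algebra $\sg\dotplus\zg$, slightly more than is needed), the application of Proposition~\ref{new} to get that $\pi=\widetilde{\pi}\vert_G$ is irreducible with orbit $\Oc=\widetilde{\Oc}\vert_{\gg}$, the observation that $\gg_\xi=\zg$ so that $\Oc$ is flat, and the identity $\Op_{\widetilde{\pi}}(a)=\Op_{\pi}(a\circ\rho_G^{-1})$ obtained by matching jump indices for an adapted Jordan--H\"older basis. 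All of this is correct and is exactly what the paper does.

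The problem is your last paragraph. You correctly locate the only delicate point of the whole proof --- namely that $\rho_G^*\colon\Diff(\widetilde{\Oc})\to\Diff(\Oc)$ is a priori only injective, so that $\Cc^\infty_b(\widetilde{\Oc})$ is controlled by fewer invariant operators than $\Cc^\infty_b(\Oc)$ --- but you then declare this to be ``the technical heart of the argument'' and defer its resolution to unspecified ``semidirect-product machinery'' in \cite{BBP13}. That is not a proof of the remaining step; as written, your argument establishes nothing beyond the reduction that the paper also carries out, and the theorem's conclusion is left unproved. For comparison, the paper closes this step by asserting, with reference to \cite[Rem.~3.3]{BBP13}, that the relation $D(a\circ\rho_G)=(\rho_G^*(D)a)\circ\rho_G$ yields a continuous injection $\Cc^{\infty}_b(\widetilde{\Oc})\hookrightarrow\Cc^{\infty}_b(\Oc)$, $a\mapsto a\circ\rho_G^{-1}$, after which \cite[Th.~1.1]{BB12} applies verbatim to $\Op_\pi$ and transfers back via $\Op_{\widetilde{\pi}}(Da)=\Op_\pi(\rho_G^*(D)(a\circ\rho_G^{-1}))$. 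Note that this embedding of symbol spaces is precisely the statement your paragraph argues should fail when $\rho_G^*$ is not surjective; so your proposal does not merely omit the paper's final step, it asserts that this step cannot be carried out in the naive way, without then supplying the alternative argument you claim is needed. Whichever side of that disagreement is right (it hinges on what exactly \cite[Rem.~3.3]{BBP13} proves about the image of $\rho_G^*$ in this setting), your write-up must either prove the embedding $\Cc^{\infty}_b(\widetilde{\Oc})\hookrightarrow\Cc^{\infty}_b(\Oc)$ or prove the boundedness directly from the weaker, $S$-invariant family of seminorms; at present it does neither.
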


\begin{proof}
Recall that we denote the center of $\gg$ by $\zg$, and denote the center of $\widetilde{\gg}$ by~$\widetilde{\zg}$. 
The bracket in $\widetilde{\gg}$ is given by 
$$[(D_1,x_1),(D_2,x_2)]:=([D_1,D_2],D_1(x_2)-D_2(x_1)+[x_1,x_2])$$
for all $D_1,D_2\in\sg$ and $x_1,x_2\in\gg$. 
It follows by the above equality along with \eqref{symp_eq1} that $\{0\}\times\zg\subseteq\widetilde{\zg}$. 
Conversely, let $(D_1,x_1)\in\widetilde{\zg}$.  
Then by the above equality for $x_2=0$ we see that $D_1$ belongs to the center of $\sg$ 
and $x_1\in \bigcap\limits_{D\in\sg}\Ker D$. 
Moreover, by that equality with $D_2=0$ we obtain 
$$(\forall x_2\in\gg)\quad D_1(x_2)+[x_1,x_2]=0.$$ 
By writing $x_j=:(t_j,x_{j0})\in\RR\times\gg_0=\gg$ for $j=1,2$, 
one obtains by the above equation along with \eqref{symp_eq0} and \eqref{symp_eq1} that 
$$0=D_1(x_2)+[x_1,x_2]=(\omega(x_{10},x_{20}),D_1(x_{20})+[x_{10},x_{20}]_{\gg_0})$$ 
for all $x_{20}\in\gg_0$, hence $x_{10}=0$  (since $\omega$ is nondegenerate) 
and then $D_1=0$. 
Consequently $\widetilde{\zg}=\{0\}\times\zg$. 
Therefore we will henceforth write $\widetilde{\zg}=\zg$, 
and $\dim\zg=1$.

Since $\widetilde{\Oc}$ is a coadjoint orbit and $\widetilde{\xi}\in\widetilde{\Oc}$, 
it follows 
that $\langle \widetilde{\Oc},\zg\rangle=\langle \widetilde{\xi},\zg\rangle$. 
Now the hypothesis on $\widetilde{\xi}$ implies $\langle \widetilde{\Oc},\zg\rangle \ne\{0\}$,  
so $\langle \widetilde{\Oc},x_0\rangle=\{1\}$ for some $x_0\in\zg$. 

Now denote $\xi:=\widetilde{\xi}\vert_{\gg}\in\gg^*$ 
and $\Oc$ be the coadjoint $G$-orbit of $\xi$. 
Recalling the notation introduced in Proposition~\ref{new}\eqref{new_item2}, 
we now prove that 
\begin{equation}\label{main3_proof_eq0}
\widetilde{\Oc}\vert_{\gg}=\Oc. 
\end{equation}
In fact, one has the direct sums of vector spaces 
$\widetilde{\gg}=\sg\dotplus\gg$ and 
$\gg=\zg\dotplus\gg_0$ 
with the Lie brackets in $\widetilde{\gg}$ satisfying 
$$[\sg,\sg]\subseteq\sg,\ [\sg,\zg]=\{0\}, \text{ and }[\sg,\gg_0]\subseteq\gg_0,$$ 
hence the hypothesis  $\gg_0+[\sg,\sg]\subseteq\Ker\widetilde{\xi}$ implies $\langle\widetilde{\xi},[\sg,\widetilde{\gg}]\rangle=\{0\}$, 
that is, $\sg\subseteq\widetilde{\gg}_{\widetilde{\xi}}$. 
It then follows that $\widetilde{\gg}=\widetilde{\gg}_{\widetilde{\xi}}+\gg$ 
and now \eqref{main3_proof_eq0} follows by the implication 
\eqref{new_item3}$\Rightarrow$\eqref{new_item2} in Proposition~\ref{new}. 

Since $\widetilde{\Oc}\vert_{\gg}=\Oc$, 
it follows by the implication \eqref{new_item2}$\Rightarrow$\eqref{new_item1} 
in Proposition~\ref{new} that $\widetilde{\pi}\vert_G\colon G\to\Bc(\Hc)$ 
is an unitary irreducible representation associated with $\Oc$.
If we introduce the $G$-equivariant diffeomorphism $\rho_G\colon\widetilde{\Oc}\to\Oc$, $\xi\mapsto\xi\vert_{\gg}$, 
then by \cite[(3.2)]{BBP13} we have 
\begin{equation}\label{main3_proof_eq1}
(\forall a\in\Sc'(\Oc))\quad \Op_{\widetilde{\pi}}(a)=\Op_{\pi}(a\circ \rho_G^{-1})
\end{equation} 
and one has an injective homomorphism 
of associative algebras 
$$\rho_G^*\colon\Diff(\widetilde{\Oc})\to\Diff(\Oc). $$
Just as in \cite[Rem. 3.3]{BBP13} we have 
$D(a\circ\rho_G)=(\rho_G^*(D)a)\circ\rho_G$ for all $a\in\Ci(\Oc)$ 
and $D\in\Diff(\widetilde{\Oc})$, 
hence one has an injective map 
$\Cc^{\infty}_b(\widetilde{\Oc})\hookrightarrow\Cc^{\infty}_b(\Oc)$, $a\mapsto a\circ\rho_G^{-1}$. 

Finally, since $G$ is a nilpotent Lie group whose coadjoint orbit $\Oc$ 
is flat,  by \cite[Th. 1.1]{BB12} 
for $a\in C^\infty(\Oc)$ we have 
\begin{equation}\label{main3_proof_eq2}
a\in \Cc^\infty_b(\Oc)\implies
(\forall D\in\Diff(\Oc)) \quad \Op_{\pi}(Da)\in\Bc(\Hc),
\end{equation} 
and 
$$\Op_{\pi}\colon \Cc^{\infty}_b(\Oc)\to\Bc(\Hc)$$
is continuous. 
The above remarks then show that these assertions hold true for $\Op_{\widetilde{\pi}}$, and this concludes the proof. 
\end{proof}

Now we show that Theorem~\ref{main3} is applicable for every unitary irreducible representation 
of any connected, simply connected, 
3-step nilpotent Lie group with 1-dimensional center. 

\begin{corollary}\label{3step}
Let $\widetilde{G}$ be any connected, simply connected, 3-step nilpotent Lie group with its Lie algebra $\widetilde{\gg}$. 
If $\pi\colon\widetilde{G}\to\Bc(\Hc)$ is any unitary irreducible representation 
associated with some coadjoint orbit $\widetilde{\Oc}\subseteq\widetilde{\gg}^*$, 
then 
for all $a\in C^\infty_b(\widetilde{\Oc})$ one has 
$$
(\forall D\in\Diff(\widetilde{\Oc})) \quad \Op(Da)\in\Bc(\Hc).$$ 
In addition, the Weyl-Pedersen calculus defines a continuous linear map 
$$\Op\colon \Cc^{\infty}_b(\widetilde{\Oc})\to\Bc(\Hc).$$
\end{corollary}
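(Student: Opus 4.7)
The plan is to cast $\widetilde{G}$, after a harmless central quotient, in the form needed for Theorem~\ref{main3} and then invoke that result.

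\emph{Reduction to $1$-dimensional center.} Given $\widetilde{\xi}\in\widetilde{\Oc}$, the central ideal $\widetilde{\zg}\cap\Ker\widetilde{\xi}$ lies in $\Ker\widetilde{\pi}$, so $\widetilde{\pi}$ descends to an irreducible representation of the quotient $\widetilde{G}/\exp(\widetilde{\zg}\cap\Ker\widetilde{\xi})$, which remains $3$-step nilpotent. Under the natural identification of $\widetilde{\Oc}$ with the orbit in the quotient, $\Diff(\widetilde{\Oc})$, $\Cc^\infty_b(\widetilde{\Oc})$, and the Weyl-Pedersen calculus are preserved. Iterating, I reach a quotient in which the center is $\RR z$ with $\widetilde{\xi}(z)=1$. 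If at any stage the quotient happens to be $\le 2$-step, the resulting coadjoint orbit is flat and the conclusion follows directly from \cite[Th.~1.1]{BB12}; otherwise the reduced group is $3$-step with $1$-dimensional center.

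\emph{Semidirect decomposition.} The $3$-step condition yields $[\widetilde{\gg},\widetilde{\gg}^{(2)}]\subseteq\widetilde{\gg}^{(3)}\subseteq\widetilde{\zg}$. Pick a vector-space splitting $\widetilde{\gg}=V_1\dotplus\widetilde{\gg}^{(2)}=V_1\dotplus V_2\dotplus\widetilde{\zg}$ with $V_2$ a complement of $\widetilde{\zg}$ in $\widetilde{\gg}^{(2)}$. Using symplectic linear algebra both for the $V_2$-valued skew form $\mu_2\colon V_1\times V_1\to V_2$ (the $V_2$-part of the bracket) and for the scalar form $\omega_{\widetilde{\xi}}(x,y):=\widetilde{\xi}([x,y])$ (whose radical is $\widetilde{\gg}_{\widetilde{\xi}}$), I would split $V_1=\sg\dotplus W_1$ so that $[\sg,\sg]=0$, $\mu_2(W_1,W_1)=0$, $\sg\subseteq\widetilde{\gg}_{\widetilde{\xi}}$, and $\omega_{\widetilde{\xi}}$ is nondegenerate on $\gg_0:=V_2\dotplus W_1$. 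With these choices, $\gg:=\widetilde{\zg}\dotplus\gg_0$ is a Heisenberg-type ideal of $\widetilde{\gg}$, identified with $\RR\dotplus_\omega\gg_0$ via $\omega:=\omega_{\widetilde{\xi}}|_{\gg_0}$, while $\sg$ is an abelian complementary subalgebra contained in the coadjoint isotropy.

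\emph{Verification and conclusion.} A coadjoint translation by an element of $G$ (possible because the restricted orbit $\widetilde{\Oc}|_{\gg}$ is flat and projects surjectively onto $\gg_0^*$) arranges $\widetilde{\xi}|_{\gg_0}=0$ while preserving $\widetilde{\xi}|_{\widetilde{\zg}}\ne 0$. The condition $\widetilde{\xi}|_{[\sg,\sg]}=0$ is automatic from $\sg\subseteq\widetilde{\gg}_{\widetilde{\xi}}$, since then $[\sg,\sg]\subseteq[\sg,\widetilde{\gg}]\subseteq\Ker\widetilde{\xi}$. Unipotence of $S:=\exp\sg$ follows from the nilpotence of $\widetilde{\gg}$, and $\ad_{\sg}$ preserves $\omega$ by the $\widetilde{\xi}$-invariance on $\sg$, giving $S\subseteq\Aut(\gg_0,\omega)$. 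All hypotheses of Theorem~\ref{main3} are verified, and its conclusion is exactly the statement of the corollary.

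\emph{Main obstacle.} The crux is the simultaneous fulfillment of the constraints in the second step: $\sg$ abelian and inside $\widetilde{\gg}_{\widetilde{\xi}}$, $W_1$ isotropic for $\mu_2$, and $\omega_{\widetilde{\xi}}$ nondegenerate on $V_2\dotplus W_1$. The $3$-step hypothesis is indispensable here, because it confines the brackets of $\widetilde{\gg}^{(2)}$-elements to the $1$-dimensional center, which is precisely what makes $\gg$ into a Heisenberg-type algebra; for groups of higher nilpotence step this confinement fails, and the present strategy no longer works.
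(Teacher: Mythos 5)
Your overall strategy (reduce to $1$-dimensional center, exhibit $\widetilde{\gg}$ as $\sg\ltimes\gg$ with $\gg=\RR\dotplus_\omega\gg_0$, invoke Theorem~\ref{main3}) is the right one, and the first and third steps are fine. But the step you yourself flag as ``the main obstacle'' is not merely unproven: the decomposition you postulate does not exist in general. Your conditions force $\gg_0$ to be \emph{abelian}: indeed $[V_2,V_2]=\{0\}$ and $[W_1,V_2]\subseteq\widetilde{\zg}$ automatically, and $\mu_2(W_1,W_1)=0$ gives $[W_1,W_1]\subseteq\widetilde{\zg}$, so your $\gg$ is a Heisenberg algebra. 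Now suppose $\widetilde{\gg}$ is genuinely $3$-step with $1$-dimensional center, so that $[\widetilde{\gg},[\widetilde{\gg},\widetilde{\gg}]]=\widetilde{\zg}$ and $[\widetilde{\gg},\widetilde{\gg}]\supsetneq\widetilde{\zg}$, i.e.\ $V_2\ne\{0\}$. For \emph{any} complement $V_1$ of $[\widetilde{\gg},\widetilde{\gg}]$ one has $[\widetilde{\gg},\widetilde{\gg}]\subseteq\spa[V_1,V_1]+\widetilde{\zg}$ (brackets involving $[\widetilde{\gg},\widetilde{\gg}]$ only contribute central terms), so $\mu_2(V_1,V_1)$ spans $V_2\ne\{0\}$. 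Hence $W_1$ can be $\mu_2$-isotropic only if $\sg=V_1/W_1\ne\{0\}$; but $\sg\subseteq\widetilde{\gg}_{\widetilde{\xi}}$ and $\sg\cap\widetilde{\zg}=\{0\}$, so $\sg=\{0\}$ whenever $\widetilde{\Oc}$ is flat. Concretely, the $5$-dimensional $3$-step algebra with basis $Z,C_1,C_2,V_1,V_2$ and nonzero brackets $[V_1,V_2]=C_1$, $[V_1,C_1]=[V_2,C_2]=Z$ has $1$-dimensional center $\RR Z$ and flat generic orbits, yet admits no splitting of your kind, since that would force the whole algebra to be Heisenberg. One cannot dismiss this by saying flat orbits are already covered by \cite[Th.~1.1]{BB12}: the same $3$-step building block occurs as an ideal inside larger algebras whose orbits are not flat, and your scheme breaks there for the same reason.

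The paper avoids this by exploiting the full generality of Theorem~\ref{main3}, whose hypothesis is that $(\gg_0,\omega)$ be a symplectic \emph{nilpotent Lie algebra} --- not a symplectic vector space. The ideal $\gg$ is produced by the structure theorem \cite[Th.~5.1]{BB13} for $3$-step nilpotent Lie algebras with $1$-dimensional center: it is a reduced product of Heisenberg algebras with a piece $\zg\dotplus\cg\dotplus\Vc$ in which $[\Vc,\Vc]\subseteq\cg$ may be nonzero, so $\gg$ is itself possibly $3$-step; what matters is only that its generic coadjoint orbits are flat (equivalently, $\omega_{\widetilde{\xi}}$ descends to a nondegenerate $2$-cocycle on the nonabelian $\gg_0=\gg/\zg$). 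That nonabelian $\gg_0$ is precisely what your over-constrained symplectic linear algebra cannot produce, so the argument needs the external structure theorem (or an equivalent substitute), not just a choice of isotropic complements.
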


\begin{proof}
By standard arguments, one may assume that the center of $\widetilde{G}$ is 1-dim\-ensional. 
Then $\widetilde{\gg}$ is a 3-step nilpotent Lie algebra whose center~$\zg$ is 1-dimensional. 
It follows by \cite[Th. 5.1]{BB13} that there exists a decomposition 
$$\widetilde{\gg}=\hg\widetilde{\times}\bigl((\zg\dotplus\cg\dotplus\Vc)\widetilde{\rtimes}(\hg_1\dotplus\sg)\bigr) $$
 some subalgebras $\hg$, $\cg$, $\hg_1$, $\ag$, and a linear subspace $\Vc$ of $\widetilde{\gg}$. 
 Here we use the operations of reduced direct product $\widetilde{\times}$ and reduced semidirect product $\widetilde{\rtimes}$ 
 introduced in \cite[Def. 2.10]{BB13} on Lie algebras with 1-dimensional centers, 
 and the following conditions are satisfied: 
\begin{itemize}
\item $\widetilde{\gg}=\hg+\zg+\cg+\Vc+\hg_1+\sg$; 
\item $\hg$ and $\hg_1$ are Heisenberg algebras that contain $\zg$, and $[\hg,\widetilde{\gg}]\subseteq\zg$; 
\item $\hg\cap(\zg+\cg+\Vc+\hg_1+\sg)=\zg$, 
$(\zg+\cg+\Vc)\cap(\hg_1+\sg)=\zg$;  
\item $\cg$ is an abelian subalgebra, $[\Vc,\Vc]\subseteq\cg$, 
and $[\cdot,\cdot]\colon\Vc\times\cg\to\zg$ is a nondegenerate bilinear map 
(hence in particular $\dim\Vc=\dim\cg$, since $\zg\simeq\RR$); 
\item $[\hg_1\dotplus\sg,\cg]=\{0\}$ and $[\hg_1\dotplus\ag,\Vc]\subseteq\cg$; 
\item $[\hg_1,\sg]=\{0\}$. 
\end{itemize}
If we define 
$$\gg:=\hg\widetilde{\times}\bigl((\zg\dotplus\cg\dotplus\Vc)\widetilde{\rtimes}\hg_1\bigr) $$
then we obtain $\widetilde{\gg}=\gg\dotplus\sg$, $[\sg,\gg]\subseteq\gg$.  
Moreover it 
follows by \cite[Th. 5.2(1)]{BB13} that the connected simply connected Lie group  
associated with $(\zg\dotplus\cg\dotplus\Vc)\widetilde{\rtimes}\hg_1$
has flat generic coadjoint orbits. 
It is easily checked that the operation of reduced direct product 
preserves the class of nilpotent Lie algebras with 1-dimensional center and generic flat coadjoint orbits,
\footnote{Let $\kg$ be any nilpotent Lie algebra whose center $\zg$ is 1-dimensional, 
and $\kg_1$ and $\kg_2$ be subalgebras of $\kg$ with generic flat coadjoint orbits and 1-dimensional centers, 
such that $\kg=\kg_1\widetilde{\times}\kg_2$,  
that is, $\kg=\kg_1+\kg_2$, $\zg=\kg_1\cap\kg_2$, and $[\kg_1,\kg_2]=\{0\}$. 
For $j=1,2$ pick any linear subspace $\kg_j^0\subseteq\kg_j$ with $\zg\dotplus\kg_j^0=\kg_j$. 
We claim that for every $\xi\in\kg^*$ with $\Ker\xi=\kg_1^0\dotplus\kg_2^0$ one has $\kg_\xi=\zg$, 
or, equivalently, for every $X\in\kg\setminus\zg$ there exists $Y\in\kg$ 
with $[X,Y]\not\in\Ker\xi$. 
To check that condition, let $X\in\kg=\zg\dotplus\kg_1^0\dotplus\kg_2^0$ with $X\not\in\zg$, 
hence $X\in X_1^0+X_2^0+\zg$, where $X_j^0\in\kg_j^0$ for $j=1,2$ with $X_1^0+X_2^0\ne0$. 
If $j\in\{1,2\}$ and  
$X_j^0\ne 0$, use the hypothesis that the generic coadjoint orbits of $\kg_j$ are flat and 
$\xi\vert_{\kg_j}$ does not vanish identically on the center $\zg$ of $\kg_j$, 
to find $Y_j\in\kg_j$ with $\langle\xi ,[X_j^0,Y_j]\rangle=1$. 
If $X_j^0=0$ then let $Y_j^0:=0$.
By the hypothesis $[\kg_1,\kg_2]=\{0\}$ we obtain $\langle\xi,[X,Y_1+Y_2]\rangle\ge1$, 
hence $[X,Y_1+Y_2]\not\in\Ker\xi$.} 
hence also the connected simply connected Lie group 
$G$ associated to $\gg$ has generic flat coadjoint orbits. 
$$\gg_0:=(\hg/\zg)\times\bigl((\cg\dotplus\Vc)\rtimes(\hg_1/\zg)\bigr) $$
$S$ as the connected subgroup of $\Aut\gg_0$ defined  by integrating 
the representation of $\sg$ on $\gg_0$ coming from the commutation relation $[\sg,\gg]\subseteq\gg$, 
then we see that all the hypotheses of Theorem~\ref{main3} are satisfied. 
\end{proof}

\begin{remark}\label{symp1}
 \normalfont 
In connection with other situations where Theorem~\ref{main3} is applicable, we note the following. 
If $(\gg_0,\omega)$ is any symplectic nilpotent Lie algebra, 
it is easily checked that if a closed subgroup $S\subseteq\Aut(\gg_0,\omega)$ is connected, 
then $S$ is unipotent if and only if  
$\sg$ consists of nilpotent derivations of $\gg_0$. 
This remark leads directly to two types of examples of such situations: 
\begin{enumerate}
\item 
If $\gg_0$ is a characteristically nilpotent Lie algebra, 
then $\Aut\gg_0$ is unipotent, and so are all its connected closed subgroups. 
Characteristically nilpotent Lie algebras with symplectic structures 
were studied in \cite{Bu06}. 
\item Let $\pg$ be any polarization of $(\gg_0,\omega)$, 
hence $\pg$ is a subalgebra of $\gg_0$ with $\omega\vert_{\pg\times\pg}=0$ and $\dim\gg_0=2\dim\pg$. 
If we define 
$$S_{\pg}:=\{\alpha\in\Aut(\gg_0,\omega)\mid (\forall x\in\pg)\quad \alpha(x)=x\}$$
then $S_{\pg}$ is a unipotent automorphism group of $\gg_0$ and 
for every $\alpha\in S_{\pg}$ we have $(\alpha-\id_{\gg_0})^2=0$. 
This follows from the corresponding statement for an abelian algebra $\gg_0=\RR^{2n}$, 
and in that special case $S_{\pg}$ is just the group $S_n$ from \cite[Sect. 3]{Ra85}. 
\end{enumerate} 
\end{remark}

\begin{problem}
\normalfont
It would be interesting to know whether some version of Beals' commutator criterion 
for recognizing pseudo-differential operators can be established for the Weyl-Pedersen calculus  
associated with coadjoint orbits that are not flat. 
Regarding the Weyl-Pedersen calculus on flat coadjoint orbits, we recall from \cite[Th. 1.1]{BB12} 
that Beals' criterion holds true under the form of the converse of the implication~\eqref{main3_proof_eq2} 
from the proof of Theorem~\ref{main3} above. 
\end{problem}

\section{Specific examples}

We now discuss some situations 
considered in the earlier literature, in particular providing
an uncountable family of pairwise nonisomorphic examples of 3-step nilpotent Lie algebras with 1-dimensional centers. 
(Recall that there exist only countably many isomorphism classes of 2-step nilpotent Lie algebras with 1-dimensional centers, 
since these are precisely the Heisenberg algebras, 
hence there exists precisely one isomorphism class for every odd dimension, 
and no isomorphism classes for the even dimensions.)

\begin{example}[{\cite[Ex. 3.5]{La05}, \cite[Ex. 5.2]{La06}}]
\normalfont
For all $s,t\in\RR\setminus\{0\}$ let $\gg_0(s,t)$ be the 6-dimensional 2-step nilpotent Lie algebra 
defined by the commutation relations 
$$[X_6,X_5]=sX_3,\ [X_6,X_4]=(s+t)X_2,\ [X_5,X_4]=tX_1. $$
It follows by \cite[Ex. 3.5]{La05} that 
$$\{\gg_0(s,t)\mid s^2+st+t^2=1,\ 0<t\le 1/\sqrt{3}\}$$
is a family of isomorphic Lie algebras 
that are however pairwise non-isomorphic as symplectic Lie algebras 
with 
the common symplectic structure $\omega$ given by the matrix 
$$J_{\omega}=
\begin{pmatrix}
\hfill 0 &\hfill 0 &\hfill 0 & 0 & 0 & 1 \\
\hfill 0 &\hfill 0 &\hfill 0 & 0 & 1 & 0 \\
\hfill 0 &\hfill 0 &\hfill 0 & 1 & 0 & 0 \\
\hfill 0 &\hfill 0 & -1      & 0 & 0 & 0 \\
\hfill 0 & -1      &\hfill 0 & 0 & 0 & 0 \\
      -1 &\hfill 0 &\hfill 0 & 0 & 0 & 0 
\end{pmatrix}$$
Then it is easily checked that for every symmetric matrix $A=A^\top\in M_3(\RR)$ 
the matrix 
$$D_A:=
\begin{pmatrix}
0 & A \\
0 & 0
\end{pmatrix}\in M_6(\RR)$$
satisfies $J_\omega D_A+D_A^TJ_\omega=0$, $D_A^2=0$, and $D_A\in\Der(\gg_0(s,t))$ for all $s,t\in\RR\setminus\{0\}$. 

Then, as in Example~\ref{N5N3_plus}, 
the group $S_A:=\{\exp(tD_A)\mid t\in\RR\}\subseteq\Aut(\gg_0(s,t),\omega)$ satisfies the hypothesis of Theorem~\ref{main3} 
for all $s,t\in\RR\setminus\{0\}$ and $A=A^\top\in M_3(\RR)$.  
If we define $\gg(s,t):=\RR\dotplus_\omega\gg_0(s,t)$ and $\widetilde{\gg}(s,t,A):=S_A\ltimes \gg(s,t)$, 
then Theorem~\ref{main3} can be applied for each Lie group in the family 
$$\{\widetilde{G}(s,t,A)\mid s,t\in\RR\setminus\{0\},\ A=A^\top\in M_3(\RR)\}.$$ 
\end{example}

\begin{example}[{\cite[page 560]{Pe89}}]\label{N5N3_plus}
\normalfont 
Let $\widetilde{\gg}$ be the 6-dimensional Lie algebra 
defined by the commutation relations 
$$\begin{aligned}
& [X_6,X_5]=X_4,\ [X_6,X_4]=X_3,\ [X_6,X_3]=X_2, \\ 
& [X_5,X_4]=X_2,\ [X_5,X_2]=-X_1, \\
& [X_4,X_3]=X_1.
\end{aligned}$$
Then $\widetilde{\gg}$ is 4-step nilpotent with 1-dimensional center. 
If we define 
$$\gg:=\spa\{X_1,X_2,X_3,X_4,X_5\}$$
then it is easily checked that the linear map defined by 
$$X_3\mapsto -X_2,\  X_1\mapsto -X_0,\  
X_j\mapsto X_{j-1}\text{ for }j\in\{2,4,5\},$$ 
is a Lie algebra isomorphism of $\gg$ onto the Lie algebra (denoted also by $\gg$) in \cite[Ex. 5.7]{BB12}. 
The center $\zg$ of $\gg$ is 1-dimensional, specifically $\zg=\RR X_1$. 
Therefore the Lie algebra $\gg_0=\gg/\zg=\spa\{X_2,X_3,X_4,X_5\}$ is defined by the commutation relation 
$$[X_5,X_4]=X_2$$ 
and the center of $\gg_0$ is spanned by $\{X_2,X_3\}$. 
The skew-symmetric bilinear functional $\omega\colon\gg_0\times\gg_0\to\RR$ is 
defined by  
$\omega(X_2,X_5)=\omega(-X_3,X_4)=-1$ and $\omega(X_i,X_j)=0$ if $2\le i<j\le 5$ and 
$(i,j)\not\in\{(2,5),(3,4)\}$, 
and this corresponds to the matrix 
$$J_\omega=
\begin{pmatrix}
 0 &\hfill 0 & 0 & -1 \\
 0 &\hfill 0 & 1 &\hfill 0 \\
 0 &      -1 & 0 &\hfill 0 \\
 1 &\hfill 0 & 0 &\hfill 0 
\end{pmatrix}$$
while $\gg$ is an ideal of $\widetilde{\gg}$ and $D:=(\ad_{\widetilde{\gg}}X_6)\vert_{\gg}\colon\gg\to\gg$ 
is a derivation that vanishes on~$\zg$, 
hence it induces a derivtion of $\gg/\zg=\gg_0$ that is given by the matrix 
$$D=
\begin{pmatrix}
0 & 1 & 0 & 0\\
0 & 0 & 1 & 0\\
0 & 0 & 0 & 1 \\
0 & 0 & 0 & 0
\end{pmatrix}$$
The above matrices satisfy the equation $J_\omega D+D^\top J_\omega=0$, 
which is equivalent to $\omega(Dx,y)+\omega(x,Dy)=0$ for all $x,y\in\gg_0$, 
and further to $\exp(tD)\in\Aut(\gg_0,\omega)$ for all $t\in\RR$. 
Thus the group $S:=\{\exp(tD)\mid t\in\RR\}$ satisfies the hypothesis of Theorem~\ref{main3} 
and we have $\widetilde{G}=S\ltimes G$, where $\widetilde{G}$ and $G$ 
are the connected simply conncted Lie groups that correspond to the Lie algebras $\widetilde{\gg}$ and $\gg$, 
respectively. 

Now let $\widetilde{\xi}\in\widetilde{\gg}^*$ with $\langle\widetilde{\xi},X_1\rangle=:a\ne0$, $\langle\widetilde{\xi},X_j\rangle=0$ for $j=2,3,4,5$ 
(that is, $\gg_0\subseteq\Ker\widetilde{\xi}$), 
and $\langle\widetilde{\xi},X_6\rangle=:b$, 
and denote by $\widetilde{\Oc}$ the coadjoint $\widetilde{G}$-orbit of $\widetilde{\xi}$. 
It follows by \cite[page 562]{Pe89} that $\widetilde{\Oc}$ is given by the equation 
$$y_6=\frac{1}{6}(6a^2b+6ay_2y_4-3ay_3^2+2y_2^3) $$
where $(y_1,y_2,y_3,y_4,y_5,y_6)$ are the Cartesian coordinates in $\widetilde{\gg}^*$ 
with respect to the dual basis 
of $\{X_1,X_2,X_3,X_4,X_5,X_6\}$. 

As indicated also on \cite[page 562]{Pe89}, an irreducible representation  
associated with $\widetilde{\Oc}$ is $\pi\colon \widetilde{G}\to\Bc(L^2(\RR^2))$ given by 
$$\begin{aligned}
\de\pi(X_1) &= \ie a\\
\de\pi(X_2) &= \ie t_1\\
\de\pi(X_3) &= \ie t_2\\
\de\pi(X_4) &= a\frac{\partial}{\partial t_2}\\
\de\pi(X_5) &= \frac{1}{a}(-a^2\frac{\partial}{\partial t_1}-\ie t_1 t_2)\\
\de\pi(X_6) &= \frac{1}{6a^2}(6a^2 t_1\frac{\partial}{\partial t_2}+6\ie ab-3\ie at_2^2+2\ie t_1^3)
\end{aligned}
$$
where $(t_1,t_2)$ are the Cartesian coordinates in $\RR^2$. 
\end{example}

\end{document}